\documentclass[12pt]{amsart}

\usepackage{enumitem}
\usepackage[utf8]{inputenc}
\usepackage[T1]{fontenc}
\usepackage[english]{babel}
\usepackage{amssymb}
\usepackage{array}
\usepackage{mathtools}
\usepackage[svgnames]{xcolor}
\usepackage{hyperref}
\usepackage{mathrsfs}
\usepackage{latexsym}
\usepackage{amsfonts}
\usepackage{amsmath}
\usepackage{amsthm}
\numberwithin{equation}{section}

\def\CC{\mathbb{C}}
\def\NN{\mathbb{N}}

\def\ZZ{\mathbb{Z}}
\def\Re{\mathfrak{R}}
\def\Im{\mathfrak{I}}

\newtheorem{Proposition}{Proposition}[section]

\newtheorem{Lemma}[Proposition]{Lemma}

\newtheorem{Corollary}[Proposition]{Corollary}
\newtheorem{Remark}[Proposition]{Remark}

\def\Li{\operatorname{Li}}

\title{The dyadic Cauchy-kernel identity:\\ several roads back to classical objects}
\author{Nick Castillo}
\date{\today}

\begin{document}

\maketitle

\begin{abstract}
This is an expository note. We take the dyadic Cauchy-kernel identity of Castillo--Costin--Costin \cite{GRA} --- a global rational/factorial decomposition built on the polylogarithm --- and follow it down several specializations. In each direction it returns to a classical landmark: the polylogarithm duplication formula and Hurwitz's Fourier-series formula (\S\ref{sec:sine}); representations of $\zeta$ at the special argument $\pi$ and at rational arguments, in the neighborhood of Hurwitz's multiplication theorem (\S\ref{sec:zeta}); the Hasse--Sondow globally convergent series (\S\ref{sec:mellin}); and, through its discrete scale invariance, the extra zeros of the Dirichlet eta function together with the harmonic-sum asymptotics of Flajolet--Gourdon--Dumas, with Dirichlet $L$-values emerging as the amplitudes of a log-periodic oscillation (\S\ref{sec:Lvalues}). The aim is unification: to exhibit one compact identity as an organizing center from which these classical results may be read off. We claim no new theorems; where an identity may not previously have been displayed in exactly this form, we say so and explain why it is nonetheless a recombination of known ingredients.
\end{abstract}

\section{Introduction}\label{sec:intro}

Our starting point is the following identity, established in \cite{GRA}: for $\Re p>0$ and $0<\Re s<1$,
\begin{equation}\label{GRAid}
    \pi p^{s-1}=\Gamma(s)\sin(\pi s)\left[\Li_s\!\left(e^{-p}\right)-\sum_{k=1}^\infty 2^{-k(1-s)}\Li_s\!\left(-e^{-2^{-k}p}\right)\right],
\end{equation}
where $\Li_s(z)=\sum_{n\ge1}z^n/n^s$ is the polylogarithm. We refer to \eqref{GRAid} as the \emph{dyadic Cauchy-kernel identity}: it expresses the homogeneous function $p^{s-1}$ through a polylogarithm at $e^{-p}$ corrected by a geometrically weighted sum of polylogarithms at the dyadically contracted, sign-reversed points $-e^{-2^{-k}p}$.

It is worth recording at the outset what \eqref{GRAid} \emph{is}, because it organizes everything that follows.

\begin{Lemma}\label{lem:telescope}
For $\Re p>0$ and $0<\Re s<1$, the bracket in \eqref{GRAid} telescopes:
\begin{equation}\label{telescope}
    \Li_s\!\left(e^{-p}\right)-\sum_{k=1}^\infty 2^{-k(1-s)}\Li_s\!\left(-e^{-2^{-k}p}\right)=\lim_{k\to\infty}2^{-k(1-s)}\Li_s\!\left(e^{-2^{-k}p}\right)=\Gamma(1-s)\,p^{s-1}.
\end{equation}
In particular, modulo the reflection formula $\Gamma(s)\Gamma(1-s)\sin(\pi s)=\pi$, the dyadic Cauchy-kernel identity is the iterated polylogarithm duplication formula, telescoped.
\end{Lemma}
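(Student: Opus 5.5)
The plan is to derive the telescoping from the polylogarithm duplication formula $\Li_s(z)+\Li_s(-z)=2^{1-s}\Li_s(z^2)$. Separating even and odd $n$ in the defining series gives it directly for $|z|<1$. I apply it with $z=e^{-2^{-k}p}$, where $|z|=e^{-2^{-k}\Re p}<1$ since $\Re p>0$. Using $z^2=e^{-2^{-(k-1)}p}$, this gives
\begin{equation*}
\Li_s\!\left(-e^{-2^{-k}p}\right)=2^{1-s}\Li_s\!\left(e^{-2^{-(k-1)}p}\right)-\Li_s\!\left(e^{-2^{-k}p}\right).
\end{equation*}
Set $a_k=2^{-k(1-s)}\Li_s(e^{-2^{-k}p})$, so that $a_0=\Li_s(e^{-p})$. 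Multiplying the displayed relation by $2^{-k(1-s)}$ yields $2^{-k(1-s)}\Li_s(-e^{-2^{-k}p})=a_{k-1}-a_k$. Hence the partial sums of the bracket are
\begin{equation*}
a_0-\sum_{k=1}^K(a_{k-1}-a_k)=a_K .
\end{equation*}
The series in \eqref{GRAid} therefore converges exactly when $\lim_K a_K$ exists, and its sum is that limit. This establishes the first equality in \eqref{telescope}, provided the limit exists, which the next step shows.

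To evaluate $\lim_K a_K$, I use the small-argument behaviour of the polylogarithm. For $\Re w>0$ and $0<\Re s<1$,
\begin{equation*}
\Li_s(e^{-w})=\Gamma(1-s)\,w^{s-1}+O(1)\qquad (w\to0,\ \text{within a sector } |\arg w|\le\theta<\pi/2),
\end{equation*}
with the principal branch of $w^{s-1}$. I derive this from the Mellin representation
\begin{equation*}
\Li_s(e^{-w})=\frac{1}{\Gamma(s)}\int_0^\infty\frac{t^{s-1}}{e^{t+w}-1}\,dt .
\end{equation*}
Alternatively, I derive it from Lindelöf's expansion $\Li_s(e^{-w})=\Gamma(1-s)w^{s-1}+\sum_{j\ge0}\zeta(s-j)(-w)^j/j!$, valid for $|w|<2\pi$, which immediately gives the $O(1)$ remainder. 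The cleanest self-contained route is the following. Write
\begin{equation*}
\Li_s(e^{-w})-\Gamma(1-s)w^{s-1}=\frac{1}{\Gamma(s)}\int_0^\infty t^{s-1}\Big(\frac{1}{e^{t+w}-1}-\frac{1}{t+w}\Big)\,dt ,
\end{equation*}
using $\frac{1}{\Gamma(s)}\int_0^\infty t^{s-1}(t+w)^{-1}\,dt=\Gamma(1-s)w^{s-1}$, which holds for $\Re w>0$ by analytic continuation from $w>0$. Then I bound the remaining integral uniformly as $w\to0$ along the ray $w=2^{-K}p$. Near $t=0$ the bracket is bounded, since $1/(e^u-1)-1/u$ is analytic at $u=0$, and $\Re s>0$ makes $t^{s-1}$ integrable there. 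At infinity the bracket is $O(1/t)$ and $\Re s<1$ gives convergence. The rays $w=2^{-K}p$ all have the fixed argument $\arg p$ with $|\arg p|<\pi/2$, so $t+w$ stays away from the poles $2\pi i m$ with $m\neq0$, and all bounds are uniform in $K$.

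With $w=2^{-K}p$, the principal branch gives $(2^{-K}p)^{s-1}=2^{-K(s-1)}p^{s-1}$. Therefore
\begin{equation*}
a_K=2^{-K(1-s)}\big[\Gamma(1-s)2^{K(1-s)}p^{s-1}+O(1)\big]=\Gamma(1-s)p^{s-1}+O\big(2^{-K(1-\Re s)}\big)\to\Gamma(1-s)p^{s-1},
\end{equation*}
where the error term vanishes because $\Re s<1$. This gives the second equality in \eqref{telescope}. Multiplying by $\Gamma(s)\sin(\pi s)$ and applying the reflection formula recovers \eqref{GRAid}, which justifies the closing remark of the lemma.

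The main obstacle is the uniform $O(1)$ bound for complex $p$ and complex $s$, in particular the branch bookkeeping for $w^{s-1}$ and the control of the integrand near $t=0$. Quoting Lindelöf's expansion would dispose of it immediately, since it needs only $|2^{-K}p|<2\pi$.
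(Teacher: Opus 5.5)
Your proof is correct and follows the paper's route: apply the duplication formula at $z=e^{-2^{-k}p}$, telescope $a_k=2^{-k(1-s)}\Li_s(e^{-2^{-k}p})$, and evaluate $\lim a_K$ using the small-argument expansion $\Li_s(e^{-w})=\Gamma(1-s)w^{s-1}+O(1)$. Your version is also somewhat more careful than the paper's: the paper states the boundary expansion only as $\varepsilon\to0^+$ and then substitutes the complex value $\varepsilon=2^{-k}p$, whereas you justify the expansion for complex $w$ (via Lindel\"of's expansion for $|w|<2\pi$, or your uniform Mellin-integral bound along the ray) and state the branch identity $(2^{-K}p)^{s-1}=2^{-K(s-1)}p^{s-1}$ explicitly.
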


\begin{proof}
The duplication (``square'') formula for the polylogarithm \cite{Lewin} reads $\Li_s(z)+\Li_s(-z)=2^{1-s}\Li_s(z^2)$. With $z=e^{-2^{-k}p}$, so that $z^2=e^{-2^{-(k-1)}p}$,
\begin{equation}
    \Li_s\!\left(-e^{-2^{-k}p}\right)=2^{1-s}\Li_s\!\left(e^{-2^{-(k-1)}p}\right)-\Li_s\!\left(e^{-2^{-k}p}\right).
\end{equation}
Writing $b_k:=2^{-k(1-s)}\Li_s(e^{-2^{-k}p})$ and multiplying by $2^{-k(1-s)}$ gives $2^{-k(1-s)}\Li_s(-e^{-2^{-k}p})=b_{k-1}-b_k$, so the dyadic sum telescopes:
\begin{equation}
    \sum_{k=1}^\infty 2^{-k(1-s)}\Li_s\!\left(-e^{-2^{-k}p}\right)=\sum_{k=1}^\infty(b_{k-1}-b_k)=b_0-\lim_{k\to\infty}b_k=\Li_s\!\left(e^{-p}\right)-\lim_{k\to\infty}b_k.
\end{equation}
The bracket therefore equals $\lim_k b_k$. Finally, the standard boundary expansion $\Li_s(e^{-\varepsilon})=\Gamma(1-s)\varepsilon^{s-1}+\sum_{m\ge0}\zeta(s-m)(-\varepsilon)^m/m!$ as $\varepsilon\to0^+$ gives, with $\varepsilon=2^{-k}p$,
\begin{equation}
    \lim_{k\to\infty}b_k=\lim_{k\to\infty}2^{-k(1-s)}\Gamma(1-s)(2^{-k}p)^{s-1}=\Gamma(1-s)\,p^{s-1},
\end{equation}
since $2^{-k(1-s)}2^{-k(s-1)}=1$. Combining with \eqref{GRAid} and the reflection formula recovers $\pi p^{s-1}=\pi p^{s-1}$.
\end{proof}

Lemma~\ref{lem:telescope} is the expository thesis in miniature: the dyadic identity is built from a single classical ingredient (duplication) plus a boundary asymptotic. The sections that follow specialize \eqref{GRAid} in various ways, and each time the same phenomenon recurs --- a classical object emerges. We have verified the numerical identities reported below to high precision; we indicate this where relevant.

\section{The dyadic Clausen sine series and Hurwitz's formula}\label{sec:sine}

Setting $p=\pm i\omega$ in \eqref{GRAid} and symmetrizing yields a real Fourier-type series.

\begin{Proposition}\label{Prop1}
    Given $\omega>0$ with $\omega\notin 2\pi\ZZ$ and $s\in\CC$ with $0<\Re s<1$,
    \begin{equation}\label{Clausian}
    \omega^{s-1}=\frac{1}{\Gamma(1-s)}\sum_{n=1}^{\infty}\frac{1}{n^s}\left\{\sin\!\left(\frac{\pi s}{2}+n\omega \right) -(-1)^n\sum_{k=1}^{\infty} \frac{\sin\!\left(\frac{\pi s}{2}+\frac{n\omega}{2^k}\right)}{2^{k(1-s)}}\right\}.
    \end{equation}
\end{Proposition}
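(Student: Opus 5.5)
The plan is to evaluate \eqref{GRAid} on the boundary line $\Re p=0$ at the two conjugate points $p=i\omega$ and $p=-i\omega$, and to average the results. By Lemma~\ref{lem:telescope}, for $\Re p>0$ we have $\Gamma(1-s)p^{s-1}=B(p)$, where
\[
B(p)=\sum_{n\ge1}n^{-s}\Bigl[e^{-np}-(-1)^n\sum_{k\ge1}2^{-k(1-s)}e^{-np/2^k}\Bigr]
\]
is the bracket in \eqref{telescope}, written out with $\Li_s(z)=\sum_n z^n/n^s$.

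Formally, put $p=i\omega$ with the principal branch, so $p^{s-1}=\omega^{s-1}e^{i\pi(s-1)/2}$. Then
\[
\Gamma(1-s)\,\omega^{s-1}=e^{-i\pi(s-1)/2}B(i\omega),
\]
and similarly with $p=-i\omega$, $\Gamma(1-s)\omega^{s-1}=e^{i\pi(s-1)/2}B(-i\omega)$. Write $\theta=\pi s/2+n\omega$. Since $e^{\mp i\pi(s-1)/2}=\pm i\,e^{\mp i\pi s/2}$, the $n$-th leading term of the half-sum of the two expressions is
\[
\tfrac12\bigl(ie^{-i\theta}-ie^{i\theta}\bigr)=\sin\theta .
\]
The same computation with $n\omega$ replaced by $n\omega/2^k$ produces the inner sines. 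Averaging the two expressions therefore gives exactly \eqref{Clausian} after dividing by $\Gamma(1-s)$. This algebraic step is routine.

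The analytic work is to justify passing to $\Re p=0$. I would set $p=\delta\pm i\omega$ and let $\delta\to0^+$. The left side $p^{s-1}$ is continuous. On the right, Abel's theorem handles each polylogarithm: for $\Re s>0$, the series $\sum_n z^n n^{-s}$ converges on $|z|=1$, $z\ne1$, and it is the radial limit of $\Li_s$. The exceptional points must be avoided, and the hypothesis $\omega\notin2\pi\ZZ$ does this.
\begin{itemize}[label={}]
\item[(i)] $e^{\mp i\omega}\ne1$.
\item[(ii)] For $k\ge1$, $-e^{\mp i\omega/2^k}=1$ would force $\omega=2^k(2m+1)\pi\in2\pi\ZZ$, so this never happens.
\end{itemize}
Moreover $-e^{\mp i\omega/2^k}\to-1$ as $k\to\infty$. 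Hence the values $\Li_s(-e^{-2^{-k}p})$ stay uniformly bounded for $k$ large and $\delta$ small, since they are near $-(1-2^{1-s})\zeta(s)$. The weights $2^{-k(1-\Re s)}$ are summable, so dominated convergence lets the $k$-sum pass to the limit $\delta\to0^+$.

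The main obstacle is the order of summation. The statement puts the $k$-sum inside the $n$-sum, whereas the boundary limit naturally produces $\sum_k 2^{-k(1-s)}\Li_s(\cdot)$ with $n$ inside. The double series is not absolutely convergent, even for $\delta>0$, so the interchange needs an argument. I would use summation by parts in $n$. The partial sums $\sum_{n\le N}(-1)^ne^{\mp in\omega/2^k}$ are bounded by $1/|\cos(\omega/2^{k+1})|$. By (ii), this bound is finite for every $k$, and it tends to $1$ as $k\to\infty$, so it is uniformly bounded in $k$.

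By Dirichlet's test with the monotone weights $n^{-s}$, handled through $|n^{-s}-(n+1)^{-s}|\ll |s|\,n^{-\Re s-1}$, the tails $\sum_{n>N}$ of the $k$-th inner series are bounded by $C\,N^{-\Re s}$ uniformly in $k$. Multiplying by the summable weights $2^{-k(1-\Re s)}$, the series $\sum_n n^{-s}(-1)^n\sum_k(\cdots)$ converges, and its value equals $\sum_k\sum_n$. This justifies writing \eqref{Clausian} in the stated order and completes the proof.
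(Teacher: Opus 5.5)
Your proposal is correct and follows the paper's route: you symmetrize \eqref{GRAid} at $p=\pm i\omega$ so the cosine parts cancel, and you justify the $n$/$k$ interchange with the same key estimate. That estimate is the uniform bound on the alternating partial sums coming from $\inf_k|1+e^{i\omega 2^{-k}}|>0$ (your $\sup_k 1/|\cos(\omega/2^{k+1})|<\infty$), and your uniform-in-$k$ tail bound in $n$ plays the role of the paper's Moore--Osgood argument. You also justify the passage from $\Re p>0$ to $\Re p=0$ via Abel's theorem and dominated convergence in $k$, a step the paper omits when it substitutes $p=\pm i\omega$ directly into an identity stated only for $\Re p>0$.
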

\begin{proof}
    Recall the Clausen functions $C(\omega,s)=\sum_n\cos(n\omega)/n^s$ and $S(\omega,s)=\sum_n\sin(n\omega)/n^s$, so that $\Li_s(e^{\mp i\omega})=C(\omega,s)\mp iS(\omega,s)$ and $\Li_s(-e^{\mp i\omega2^{-k}})=\sum_n(-1)^n n^{-s}e^{\mp in\omega2^{-k}}$. Evaluating \eqref{GRAid} at $p=i\omega$ and at $p=-i\omega$ and expanding the polylogarithms as power series gives the conjugate pair
\begin{align}
    \pi (i\omega)^{s-1}&=\Gamma(s)\sin(\pi s)\sum_{n=1}^\infty \frac{1}{n^s}\Big\{ e^{-in\omega}-\sum_{k=1}^\infty 2^{-k(1-s)}(-1)^n e^{-in\omega 2^{-k}}\Big\},\label{piw}\\
    \pi (-i\omega)^{s-1}&=\Gamma(s)\sin(\pi s)\sum_{n=1}^\infty \frac{1}{n^s}\Big\{ e^{in\omega}-\sum_{k=1}^\infty 2^{-k(1-s)}(-1)^n e^{in\omega 2^{-k}}\Big\}.\label{miw}
\end{align}
Since $(\pm i\omega)^{s-1}=e^{\pm i\pi(s-1)/2}\omega^{s-1}$, multiply \eqref{piw} by $e^{-i\pi(s-1)/2}$ and \eqref{miw} by $e^{i\pi(s-1)/2}$, so that each left-hand side becomes $\pi\omega^{s-1}$. Using $\Gamma(s)\sin(\pi s)=\pi/\Gamma(1-s)$ and the elementary identity
\begin{equation}\label{expansion}
    e^{\mp i\pi(s-1)/2}e^{\mp in\varphi}=\sin\!\left(\tfrac{\pi s}{2}+n\varphi\right)\pm i\cos\!\left(\tfrac{\pi s}{2}+n\varphi\right),
\end{equation}
and averaging the two resulting identities, the cosine series --- single and dyadic alike --- cancel, leaving
\begin{equation}\label{AfRefProp1Id}
    \omega^{s-1}=\frac{1}{\Gamma(1-s)}\Big\{\sum_{n=1}^\infty \frac{1}{n^s}\sin\!\left(\tfrac{\pi s}{2}+n\omega\right)-\sum_{k=1}^{\infty}2^{-k(1-s)}\sum_{n=1}^\infty \frac{(-1)^n}{n^s}\sin\!\left(\tfrac{\pi s}{2}+\tfrac{n\omega}{2^k}\right)\Big\}.
\end{equation}
The hypothesis $\omega\notin2\pi\ZZ$ guarantees convergence of the Clausen series and that $\omega2^{-k}\not\equiv\pi\pmod{2\pi}$ for every $k$. It remains to justify the interchange of summations, for which we invoke the Moore--Osgood theorem \cite{Graves}. By Abel summation, for $z\in\CC\setminus\{1\}$ with $|z|=1$,
\begin{equation}\label{AbelEst}
    \left|\sum_{n=1}^N\frac{z^n}{n^s}\right|\leq \frac{C_s}{|1-z|},\qquad C_s=2\left(1+\frac{|s|}{\Re s}\right),
\end{equation}
uniformly in $N$. Writing the inner sine as exponentials with $\theta=\omega2^{-k}$ and applying \eqref{AbelEst} at $z=-e^{\pm i\theta}$,
\begin{equation}\label{InnernSumEst}
    \left|\sum_{n=1}^N \frac{(-1)^n}{n^s}\sin\!\left(\tfrac{\pi s}{2}+n\theta\right)\right|\leq \frac{C_s\cosh\!\left(\tfrac{\pi \Im s}{2}\right)}{|1+e^{i\theta}|}.
\end{equation}
With $\theta_k=\omega2^{-k}\to0$ we have $|1+e^{i\theta_k}|\to2$ and each is positive, so $\inf_k|1+e^{i\theta_k}|>0$; hence
\begin{equation}
    M_s:=\sup_{(k,N)\in\NN^2}\left|\sum_{n=1}^N \frac{(-1)^n}{n^s}\sin\!\left(\tfrac{\pi s}{2}+n\theta_k\right)\right|\leq \frac{C_s\cosh\!\left(\tfrac{\pi \Im s}{2}\right)}{\inf_{k}|1+e^{i\theta_k}|}<\infty.
\end{equation}
For each fixed $K$ the inner limit in $N$ exists by the Dirichlet test \cite{RudinPrinc}, and writing $a_{n,k}=(-1)^n n^{-s}\sin(\tfrac{\pi s}{2}+n\theta_k)$, $S_{N,K}=\sum_{k\le K}\sum_{n\le N}2^{-k(1-s)}a_{n,k}$,
\begin{equation}
        |S_{N,\infty}-S_{N,K}|\leq M_s \sum_{k=K+1}^\infty 2^{-k(1-\Re s)}=\frac{M_s\, 2^{-K(1-\Re s)}}{2^{\,1-\Re s}-1},
\end{equation}
uniformly in $N$. The two iterated limits therefore agree, which is the required interchange.
\end{proof}

\begin{Remark}[what is classical here]\label{rem:hurwitz}
The single sum in \eqref{Clausian} is, term for term, Hurwitz's Fourier-series formula for the Hurwitz zeta function. Indeed, with $a=\omega/2\pi$,
\begin{equation}\label{singleHurwitz}
    \sum_{n=1}^\infty\frac{1}{n^s}\sin\!\left(\tfrac{\pi s}{2}+n\omega\right)=(2\pi)^{s-1}\Gamma(1-s)\big[\zeta(1-s,a)-\cos(\pi s)\,\zeta(1-s,1-a)\big],
\end{equation}
which is Hurwitz's formula \cite{Apostol,Titchmarsh}, valid throughout $0<\Re s<1$ by the extension of Boudjelkha \cite{Boudjelkha}. By Lemma~\ref{lem:telescope} the dyadic correction is the telescoped duplication formula. Thus Proposition~\ref{Prop1} combines two classical ingredients --- Hurwitz's formula and the duplication formula --- and produces no transcendental content beyond them.
\end{Remark}

\section{Specializations to the zeta function}\label{sec:zeta}

\subsection{The argument $\omega=\pi$.} Specializing \eqref{Clausian} to $\omega=\pi$ yields a representation of $\pi^{s-1}$.
\begin{Corollary}\label{piRep}
    For $0<\Re s<1$,
    \begin{equation}\label{piRepEq}
        \pi^{s-1} = \frac{1}{\Gamma(1-s)}\left\{- \sin\!\left(\tfrac{\pi s}{2}\right)(1-2^{1-s})\zeta(s) -\sum_{n=1}^{\infty}\sum_{k=1}^{\infty} \frac{(-1)^n}{n^s} \frac{\sin\!\left(\tfrac{\pi s}{2}+\tfrac{n\pi}{2^k}\right)}{2^{k(1-s)}}\right\}.
    \end{equation}
\end{Corollary}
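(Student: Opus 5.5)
We obtain Corollary~\ref{piRep} by setting $\omega=\pi$ in Proposition~\ref{Prop1} and then evaluating the single sum in closed form. The hypotheses hold, since $\pi>0$ and $\pi\notin 2\pi\ZZ$. The double sum in \eqref{Clausian} becomes exactly the dyadic double sum in \eqref{piRepEq} with $\omega=\pi$. By the Moore--Osgood argument in the proof of Proposition~\ref{Prop1}, that double sum may be read in either order, and we will use the order $\sum_n\sum_k$ displayed in \eqref{piRepEq}. So all the remaining work is in the single sum.

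At $\omega=\pi$ the single sum becomes
\begin{equation*}
\sum_{n\ge1}\frac{1}{n^s}\sin\!\left(\tfrac{\pi s}{2}+n\pi\right)=\sin\!\left(\tfrac{\pi s}{2}\right)\sum_{n\ge1}\frac{(-1)^n}{n^s},
\end{equation*}
because $\sin(x+n\pi)=(-1)^n\sin x$. The alternating series converges for $\Re s>0$ and equals $-\eta(s)$, where $\eta$ is the Dirichlet eta function. For $0<\Re s<1$ we have $\eta(s)=(1-2^{1-s})\zeta(s)$. To justify this, first check it for $\Re s>1$: there
\begin{equation*}
\sum_n(-1)^n n^{-s}=2\sum_n (2n)^{-s}-\sum_n n^{-s}=(2^{1-s}-1)\zeta(s),
\end{equation*}
since $\sum_n(-1)^n n^{-s}$ is the even terms minus the odd terms, i.e.\ twice the even terms minus all terms. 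Both sides are analytic on $\Re s>0$ (the alternating series converges locally uniformly there by Abel summation, e.g.\ via \eqref{AbelEst} at $z=-1$, or classically), so the identity extends to $0<\Re s<1$ by analytic continuation. Substituting gives the single-sum term $-\sin(\tfrac{\pi s}{2})(1-2^{1-s})\zeta(s)$. Placing it inside the braces with the prefactor $1/\Gamma(1-s)$ gives \eqref{piRepEq}.

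There is no substantial obstacle here. The only points needing care are the analytic continuation of the eta--zeta relation into the critical strip and the order of the double summation. As a check, the alternative route via Hurwitz's formula \eqref{singleHurwitz} with $a=\tfrac12$ should give the same result. Using $\zeta(1-s,\tfrac12)=(2^{1-s}-1)\zeta(1-s)$ and the functional equation, that route should reproduce $-\sin(\tfrac{\pi s}{2})(1-2^{1-s})\zeta(s)$.
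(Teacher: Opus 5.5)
Your proposal is correct and follows the paper's proof: you specialize Proposition~\ref{Prop1} to $\omega=\pi$, factor the single sum using $\sin(\tfrac{\pi s}{2}+n\pi)=(-1)^n\sin(\tfrac{\pi s}{2})$, apply the eta--zeta relation (checked for $\Re s>1$ and continued into the strip), and carry the double sum over unchanged. Your remark on the summation order and your Hurwitz cross-check at $a=\tfrac12$ are both consistent but not needed, since \eqref{Clausian} already displays the $\sum_n\sum_k$ order and the single sum converges on its own.
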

\begin{proof}
    At $\omega=\pi$, the angle-addition formula gives $\sin(\tfrac{\pi s}{2}+n\pi)=(-1)^n\sin(\tfrac{\pi s}{2})$, so the single sum in \eqref{Clausian} factors as $\sin(\tfrac{\pi s}{2})\sum_n(-1)^n n^{-s}$. The alternating Dirichlet series satisfies $\sum_n(-1)^{n-1}n^{-s}=(1-2^{1-s})\zeta(s)$ (immediate for $\Re s>1$ from $\zeta(s)-2\cdot2^{-s}\zeta(s)$, and valid in the strip by continuation), whence $\sum_n(-1)^n n^{-s}=-(1-2^{1-s})\zeta(s)$. The double sum is carried over unchanged. No appeal to the functional equation is required.
\end{proof}

Write $\eta(s)=(1-2^{1-s})\zeta(s)$ for the Dirichlet eta function and
\begin{equation}
    \mathcal{D}(s):=\sum_{n=1}^{\infty}\sum_{k=1}^{\infty} \frac{(-1)^n}{n^s} \frac{\sin\!\left(\tfrac{\pi s}{2}+\tfrac{n\pi}{2^k}\right)}{2^{k(1-s)}}
\end{equation}
for the dyadic double sum. Combining Corollary~\ref{piRep} with Riemann's functional equation gives a representation of the ratio of $\zeta$ to its reflection.

\begin{Proposition}\label{ratioRep}
    For $0<\Re s<1$, away from zeros of $\zeta(1-s)$,
    \begin{equation}
        \frac{\zeta(s)}{\zeta(1-s)}=-2^s \sin\!\left(\tfrac{\pi s}{2}\right)\left[\sin\!\left(\tfrac{\pi s}{2}\right) \eta(s) +\mathcal{D}(s)\right].
    \end{equation}
\end{Proposition}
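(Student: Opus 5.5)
The plan is to eliminate the factor $\Gamma(1-s)\pi^{s-1}$ between Corollary~\ref{piRep} and Riemann's functional equation. Nothing beyond these two ingredients should be needed.

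First, rewrite Corollary~\ref{piRep} in the notation just introduced. Multiplying \eqref{piRepEq} by $\Gamma(1-s)$, which is finite and nonzero for $0<\Re s<1$, gives
\begin{equation*}
\Gamma(1-s)\,\pi^{s-1}=-\Big[\sin\!\left(\tfrac{\pi s}{2}\right)\eta(s)+\mathcal{D}(s)\Big].
\end{equation*}
Here the single sum has already been identified with $-\sin(\tfrac{\pi s}{2})\eta(s)$ in the proof of the corollary. The double sum $\mathcal{D}(s)$ is read in the order $\sum_k\sum_n$ justified in Proposition~\ref{Prop1} at $\omega=\pi$; that is the same value as in \eqref{piRepEq} by the Moore--Osgood argument given there.

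Second, invoke the asymmetric form of the functional equation,
\begin{equation*}
\zeta(s)=2^s\pi^{s-1}\sin\!\left(\tfrac{\pi s}{2}\right)\Gamma(1-s)\,\zeta(1-s),
\end{equation*}
which is valid for all $s$ in the strip. Dividing by $\zeta(1-s)$, permitted by the hypothesis that we stay away from its zeros, yields
\begin{equation*}
\frac{\zeta(s)}{\zeta(1-s)}=2^s\sin\!\left(\tfrac{\pi s}{2}\right)\Gamma(1-s)\pi^{s-1}.
\end{equation*}
Substituting the first display into this gives exactly the claimed formula $-2^s\sin(\tfrac{\pi s}{2})\big[\sin(\tfrac{\pi s}{2})\eta(s)+\mathcal{D}(s)\big]$.

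There is no genuine obstacle. The only points needing care are:
\begin{itemize}
\item choosing the correct normalization of the functional equation, namely the form with $2^s\pi^{s-1}\sin(\pi s/2)\Gamma(1-s)$ rather than the symmetric $\xi$-form;
\item tracking the overall minus sign coming from $\sum_n(-1)^n n^{-s}=-\eta(s)$;
\item noting that the convergence and interchange of order for $\mathcal{D}(s)$ were already settled in Proposition~\ref{Prop1}, so the identity holds as an equality of convergent expressions on the whole strip, minus the zeros of $\zeta(1-s)$.
\end{itemize}
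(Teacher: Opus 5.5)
Your proposal is correct and is essentially the paper's proof: both rewrite Corollary~\ref{piRep} as $\Gamma(1-s)\pi^{s-1}=-[\sin(\tfrac{\pi s}{2})\eta(s)+\mathcal{D}(s)]$ and substitute it into the functional equation in the form $\zeta(s)/\zeta(1-s)=2^s\sin(\tfrac{\pi s}{2})\Gamma(1-s)\pi^{s-1}$. Your extra remarks on the summation order of $\mathcal{D}(s)$, and on proving the identity pointwise in the strip rather than ``as meromorphic functions,'' are harmless. They are in fact slightly more careful than the paper's wording, since $\mathcal{D}(s)$ is defined only as a convergent series in the strip.
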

\begin{proof}
    By Corollary~\ref{piRep}, $\Gamma(1-s)\pi^{s-1}=-[\sin(\tfrac{\pi s}{2})\eta(s)+\mathcal{D}(s)]$. Riemann's functional equation in the form $\zeta(s)=2^s\pi^{s-1}\sin(\tfrac{\pi s}{2})\Gamma(1-s)\zeta(1-s)$ gives $\zeta(s)/\zeta(1-s)=2^s\sin(\tfrac{\pi s}{2})(\pi^{s-1}\Gamma(1-s))$; substituting the previous display yields the assertion as an identity of meromorphic functions.
\end{proof}

\begin{Remark}
Proposition~\ref{ratioRep} is equivalent to Corollary~\ref{piRep}: substituting $\mathcal{D}(s)=-\pi^{s-1}\Gamma(1-s)-\sin(\tfrac{\pi s}{2})\eta(s)$ collapses it identically to Riemann's functional equation. It is recorded only because the ratio form is suggestive; it carries no information beyond Corollary~\ref{piRep}.
\end{Remark}

\subsection{Rational arguments.} Write $\mathcal{D}(s;p/q)$ for the dyadic double sum at $\omega=2\pi p/q$,
\begin{equation}
    \mathcal{D}(s;p/q):=\sum_{n=1}^{\infty}\sum_{k=1}^{\infty}  \frac{(-1)^n\sin\!\left(\tfrac{\pi s}{2}+\tfrac{2 n\pi p}{q 2^k }\right)}{n^s 2^{k(1-s)}}.
\end{equation}
Specializing \eqref{Clausian} to $\omega=2\pi p/q$ and invoking Hurwitz's formula gives a representation at rational arguments.
\begin{Proposition}\label{ratRep}
    Let $p,q\in\NN$ with $0<p<q$ and $0<\Re s<1$. Then
    \begin{equation}
        \left(\frac{p}{q}\right)^{s-1}=\zeta\!\left(1-s,\tfrac{p}{q}\right)-\cos(\pi s)\,\zeta\!\left(1-s,1-\tfrac{p}{q}\right)-\frac{\mathcal{D}(s;p/q)}{(2\pi)^{s-1}\Gamma(1-s)}.
    \end{equation}
\end{Proposition}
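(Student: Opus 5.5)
We specialize Proposition~\ref{Prop1} to $\omega=2\pi p/q$. Since $0<p<q$, we have $0<\omega<2\pi$, so $\omega\notin2\pi\ZZ$ and the proposition applies. Because the proof of Proposition~\ref{Prop1} justified the interchange of the $n$- and $k$-summations, we may use its form \eqref{AfRefProp1Id} and write the dyadic part as the double sum $\mathcal{D}(s;p/q)$, in either order. This gives
\begin{equation*}
\Big(\frac{2\pi p}{q}\Big)^{s-1}=\frac{1}{\Gamma(1-s)}\Big\{\sum_{n=1}^\infty\frac{1}{n^s}\sin\!\big(\tfrac{\pi s}{2}+\tfrac{2\pi n p}{q}\big)-\mathcal{D}(s;p/q)\Big\}.
\end{equation*}

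Next we evaluate the single sum by Hurwitz's formula \eqref{singleHurwitz} with $a=\omega/2\pi=p/q\in(0,1)$. By Remark~\ref{rem:hurwitz} (Boudjelkha's extension), this is valid throughout $0<\Re s<1$. The single sum therefore equals
\begin{equation*}
(2\pi)^{s-1}\Gamma(1-s)\big[\zeta(1-s,p/q)-\cos(\pi s)\,\zeta(1-s,1-p/q)\big].
\end{equation*}
Substituting this and dividing both sides by $(2\pi)^{s-1}$, using $(2\pi p/q)^{s-1}=(2\pi)^{s-1}(p/q)^{s-1}$, gives exactly the claimed identity. That factorization of the power is legitimate because all bases are positive reals and we use principal powers.

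The only delicate point is whether \eqref{singleHurwitz} may be used on the whole strip rather than only in the classical range $\Re s<0$, i.e.\ $\Re(1-s)>1$, where the Fourier series converges absolutely. We rely on the cited extension. Alternatively, both sides of \eqref{singleHurwitz} are analytic in $s$ on the strip: the left side because the Clausen series converges locally uniformly for $a\notin\ZZ$ by the Abel estimate \eqref{AbelEst}. Both sides also continue into $\Re s<0$, where they agree classically, so the identity extends to the strip by uniqueness of analytic continuation. Everything else is algebraic bookkeeping.
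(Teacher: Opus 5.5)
Your argument is correct and follows the paper's proof: specialize Proposition~\ref{Prop1} at $\omega=2\pi p/q$ and evaluate the single sum by Hurwitz's formula. The only difference is that the paper re-derives \eqref{singleHurwitz} by solving the $2\times2$ linear system from the standard form of Hurwitz's formula at $a$ and $1-a$, whereas you quote it directly from Remark~\ref{rem:hurwitz}.

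There is one slip in your backup continuation argument. The range where the Fourier series converges absolutely, and where Hurwitz's formula is classically proved, is $\Re s>1$ (i.e.\ $\Re(1-s)<0$), not $\Re s<0$; for $\Re s<0$ the series diverges. Since the series is analytic on all of $\Re s>0$ and the right side is meromorphic there, your identity-theorem argument goes through once that half-plane is corrected.
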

\begin{proof}
    Set $a:=p/q\in(0,1)$; then $\omega=2\pi a\notin2\pi\ZZ$, so Proposition~\ref{Prop1} applies. Its double sum is $\mathcal{D}(s;a)$, and writing $\omega^{s-1}=(2\pi)^{s-1}(p/q)^{s-1}$,
    \begin{equation}\label{ratStart}
        \left(\tfrac{p}{q}\right)^{s-1}=\frac{\mathcal{T}(s;a)-\mathcal{D}(s;a)}{(2\pi)^{s-1}\Gamma(1-s)},\qquad \mathcal{T}(s;a):=\sum_{n=1}^\infty\frac{1}{n^s}\sin\!\left(\tfrac{\pi s}{2}+2\pi n a\right).
    \end{equation}
    With the periodic zeta function $F(x,s):=\sum_n e^{2\pi inx}/n^s$ and $c:=e^{i\pi s/2}$, expanding the sine gives $\mathcal{T}(s;a)=\tfrac{1}{2i}(c\,F(a,s)-c^{-1}F(-a,s))$. Hurwitz's formula \cite{Apostol}, established for $\Re s>1$, $0<a\le1$ and continued to $0<\Re s<1$, $0<a<1$ via the conditional convergence of $F(\pm a,s)$ (Dirichlet's test, $a\notin\ZZ$) and the continuation of the Hurwitz zeta, reads
    \begin{equation}\label{Hurwitz}
        \zeta(1-s,a)=\frac{\Gamma(s)}{(2\pi)^s}\big(c^{-1}F(a,s)+c\,F(-a,s)\big).
    \end{equation}
    Applying \eqref{Hurwitz} at $a$ and $1-a$ (using $F(-a,s)=F(1-a,s)$ and $F(a-1,s)=F(a,s)$) yields the system $c^{-1}A+cB=Z_1$, $cA+c^{-1}B=Z_2$, with $A=F(a,s)$, $B=F(-a,s)$, $Z_1=\tfrac{(2\pi)^s}{\Gamma(s)}\zeta(1-s,a)$, $Z_2=\tfrac{(2\pi)^s}{\Gamma(s)}\zeta(1-s,1-a)$. Solving and using $c^2\pm c^{-2}=2\cos\pi s,\,2i\sin\pi s$,
    \begin{equation}
        \mathcal{T}(s;a)=\tfrac{1}{2i}(cA-c^{-1}B)=\frac{Z_1-\cos(\pi s)Z_2}{2\sin\pi s}=\frac{(2\pi)^s}{2\sin(\pi s)\Gamma(s)}\big[\zeta(1-s,a)-\cos(\pi s)\zeta(1-s,1-a)\big].
    \end{equation}
    Dividing by $(2\pi)^{s-1}\Gamma(1-s)$ and using $\Gamma(s)\Gamma(1-s)=\pi/\sin(\pi s)$ collapses the prefactor to $1$, so $\mathcal{T}(s;a)/[(2\pi)^{s-1}\Gamma(1-s)]=\zeta(1-s,a)-\cos(\pi s)\zeta(1-s,1-a)$. Substituting into \eqref{ratStart} gives the claim.
\end{proof}

\begin{Remark}
Proposition~\ref{ratRep} is Hurwitz's formula at rational arguments, with the dyadic term as a determined remainder. It lies adjacent to the Hurwitz multiplication theorem $k^s\zeta(s)=\sum_{n=1}^k\zeta(s,n/k)$, which already supplies rational-argument relations such as $\zeta(s,\tfrac12)=(2^s-1)\zeta(s)$.
\end{Remark}

\section{A companion observation: the Mellin route and Hasse--Sondow}\label{sec:mellin}

There is a second, independent way the dyadic machinery meets the zeta function. Applying the global factorial expansion of \cite{GRA} to the Mellin representation
\begin{equation}
    \zeta(s)=\frac{1}{s-1}+\frac{\sin(\pi s)}{\pi}\int_0^\infty\big(\ln(1+x)-\Psi(1+x)\big)x^{-s}\,dx \qquad(0<\Re s<1)
\end{equation}
produces a convergent double series of Mellin transforms of rational functions. Evaluating those integrals in closed form and summing the resulting geometric series in the dyadic index collapses the expansion to
\begin{equation}\label{HS}
    \zeta(s)=-\frac{2^{s-1}}{1-2^{s-1}}\,\eta_{\mathrm{HS}}(s),\qquad \eta_{\mathrm{HS}}(s)=\sum_{n=0}^\infty\frac{1}{2^{n+1}}\sum_{m=0}^n(-1)^m\binom{n}{m}(m+1)^{-s},
\end{equation}
in which $\eta_{\mathrm{HS}}$ is precisely the Hasse--Sondow globally convergent series for $\eta$ \cite{Hasse,Sondow}; one checks directly that $-2^{s-1}/(1-2^{s-1})$ times $(1-2^{1-s})\zeta(s)=\eta(s)$ equals $\zeta(s)$. We have verified \eqref{HS} numerically, including on the critical line and at the first nontrivial zero. Thus the Mellin route, like the Fourier route of \S\ref{sec:sine}, recovers a classical convergent series; the finite-difference inner sum is the Euler-transformation (N\"orlund--Rice) structure underlying Hasse--Sondow.

\section{Discrete scale invariance and Dirichlet $L$-values}\label{sec:Lvalues}

The dyadic weight $2^{-k(1-s)}$ carries a scaling structure that we now isolate, following the analytic theory of harmonic sums \cite{FGD}. For a profile $\phi:(0,\infty)\to\CC$ of suitable decay, set
\begin{equation}\label{harmonic}
    F_\phi(\omega):=\sum_{k=1}^\infty 2^{-k(1-s)}\phi\!\left(\omega 2^{-k}\right);
\end{equation}
the dyadic correction of Proposition~\ref{Prop1} is $F_\phi$ with profile $\phi(\theta)=\sum_n(-1)^n n^{-s}\sin(\tfrac{\pi s}{2}+n\theta)$. Two structural facts govern $F_\phi$.

\emph{Renormalization.} A reindexing gives the exact functional equation
\begin{equation}\label{RG}
    F_\phi(2\omega)=2^{s-1}\big[\phi(\omega)+F_\phi(\omega)\big],
\end{equation}
the statement of dyadic discrete scale invariance: up to the boundary term, doubling $\omega$ multiplies $F_\phi$ by $2^{s-1}$.

\emph{Complex dimensions.} Taking the Mellin transform $\widetilde F_\phi(\sigma)=\int_0^\infty F_\phi(\omega)\omega^{\sigma-1}d\omega$ and using $\int_0^\infty\phi(\omega2^{-k})\omega^{\sigma-1}d\omega=2^{k\sigma}\widetilde\phi(\sigma)$,
\begin{equation}\label{MellinF}
    \widetilde F_\phi(\sigma)=\frac{\widetilde\phi(\sigma)}{2^{(1-s)-\sigma}-1},
\end{equation}
whose denominator vanishes exactly at the tower
\begin{equation}\label{tower}
    \sigma_n=(1-s)-\frac{2\pi i n}{\ln 2},\qquad n\in\ZZ .
\end{equation}
These are the complex dimensions of the dyadic structure in the sense of Lapidus--van Frankenhuijsen \cite{Lapidus}: equally spaced on $\Re\sigma=1-\Re s$ with period $2\pi/\ln 2$. Equivalently, in the $s$-variable the resummed weight $\sum_k 2^{-k(1-s)}=2^{s-1}/(1-2^{s-1})$ has poles at $s=1+2\pi i n/\ln 2$ --- the zeros of the dyadic Dirichlet polynomial $1-2^{1-s}$, which are exactly the extra zeros of the Dirichlet eta function $\eta(s)=(1-2^{1-s})\zeta(s)$ on the line $\Re s=1$ \cite{Sondow}.

By the Mellin--asymptotic correspondence for harmonic sums \cite{FGD}, transferring \eqref{MellinF} across the tower gives, as $\omega\to\infty$,
\begin{equation}\label{logperiodic}
    F_\phi(\omega)\sim\frac{\omega^{s-1}}{\ln 2}\sum_{n\in\ZZ}\widetilde\phi(\sigma_n)\,e^{2\pi i n\log_2\omega},
\end{equation}
so that $F_\phi(\omega)/\omega^{s-1}$ is asymptotically periodic in $\log_2\omega$ --- a log-periodic oscillation whose Fourier coefficients are the values of the profile's Mellin transform at the complex dimensions.

The arithmetic specialization is now immediate. For a Dirichlet character $\chi$ take the profile
\begin{equation}
    \phi_\chi(\theta)=\sum_{m=1}^\infty\chi(m)e^{-m\theta},\qquad \widetilde\phi_\chi(\sigma)=\Gamma(\sigma)L(\sigma,\chi),
\end{equation}
so that the oscillation amplitudes in \eqref{logperiodic} are Dirichlet $L$-values at the tower:
\begin{equation}\label{Lamplitudes}
    c_n=\frac{\Gamma(\sigma_n)\,L(\sigma_n,\chi)}{\ln 2},\qquad \sigma_n=(1-s)-\frac{2\pi i n}{\ln 2}.
\end{equation}
For the character $\chi_{-4}$ modulo $4$ --- conductor a power of two, so the character period matches the dyadic scale --- the profile has the closed form $\phi_{\chi_{-4}}(\theta)=\tfrac{1}{2}\operatorname{sech}\theta$, with $L(\sigma,\chi_{-4})=\beta(\sigma)$ the Dirichlet beta function, giving $c_n=\Gamma(\sigma_n)\beta(\sigma_n)/\ln 2$. A direct numerical extraction of the oscillation confirms \eqref{Lamplitudes}: the mean and first harmonic agree with $\Gamma(\sigma_n)\beta(\sigma_n)/\ln2$, the higher harmonics being correct but below the numerical floor.

\begin{Remark}[suppression]
The amplitudes decay extremely fast. Since $|\Im\sigma_n|=2\pi|n|/\ln2\approx 9.06\,|n|$ and $|\Gamma(\sigma+i\tau)|\sim e^{-\pi|\tau|/2}$ as $|\tau|\to\infty$, the first harmonic is already of relative size $\sim10^{-6}$ and the remainder is essentially invisible. The log-periodic structure is genuine but, for a single dyadic scale, exponentially small.
\end{Remark}

\begin{Remark}[what is classical here]
Identity \eqref{logperiodic} is an instance of the harmonic-sum Mellin asymptotics of Flajolet--Gourdon--Dumas \cite{FGD}; the appearance of $L$-values in \eqref{Lamplitudes} is the statement that the Mellin transform of an arithmetic profile is an $L$-function. The complex dimensions coincide with the eta extra-zeros. The $L$-values occur at generic points of the line $\Re\sigma=1-\Re s$, not at distinguished arguments. So this route too recovers classical objects --- the analytic theory of harmonic sums and the eta function --- rather than producing new ones.
\end{Remark}

\section{Roads back to classical objects}\label{sec:discussion}

It is worth collecting what the preceding sections show, because one pattern runs through all of them. The dyadic Cauchy-kernel identity is a single compact object, and each specialization returns to a classical landmark:
\begin{enumerate}[label=(\roman*)]
    \item On the unit circle, symmetrized, it is the iterated polylogarithm duplication formula telescoped against Hurwitz's Fourier-series formula for $\zeta(s,x)$ (Lemma~\ref{lem:telescope}, \S\ref{sec:sine}).
    \item At $\omega=\pi$ it gives a representation of $\pi^{s-1}$ through the eta--zeta relation, and with the functional equation, the ratio $\zeta(s)/\zeta(1-s)$ (\S\ref{sec:zeta}).
    \item At $\omega=2\pi p/q$ it gives Hurwitz's formula at rational arguments, in the neighborhood of the multiplication theorem (\S\ref{sec:zeta}).
    \item Through its Mellin transform it reproduces the Hasse--Sondow globally convergent series (\S\ref{sec:mellin}).
    \item Through its discrete scale invariance its complex dimensions are the eta extra-zeros, and the log-periodic amplitudes of the associated harmonic sums are Dirichlet $L$-values (\S\ref{sec:Lvalues}).
\end{enumerate}
The recurrence is not accidental. By Lemma~\ref{lem:telescope} the identity is assembled from the duplication formula and, through Hurwitz's formula and the Mellin apparatus, from the classical analytic theory of $\zeta$ and $L$; any specialization inherits that content. What this note offers, then, is not new theorems but a single vantage point --- a compact dyadic identity from which several familiar results may be read off, together with a map of how they connect. Where a particular display above may not previously have appeared in exactly this form, it is in each case a recombination of the ingredients named, and we make no claim of novelty for it. The value, if any, is organizational: the duplication formula, Hurwitz's formula, the Hasse--Sondow series, the eta extra-zeros, and the harmonic-sum asymptotics are not usually seen as facets of one identity, and here they are.

\end{document}